\newtheorem{theorem}{Theorem}[section]
\DeclareMathOperator{\Aut}{Aut}
\DeclareMathOperator{\Out}{Out}
\DeclareMathOperator{\Inn}{Inn}
\let\uml\"
\title{Short Presentations for $\Aut_n^+$ and $\Out_n^+$} 
\author{Andrea Heald}  
\address{} 
\email{amheald@uw.edu}  
\keywords{}
\subjclass[2010]{}
\begin{document} 
 
\begin{abstract}  
This paper uses the methods from \cite{GKKL} to give 'short' presentations for $\Aut^+(F_n)$, the special automorphism group of the free group of rank $n$, and $\Out^+(F_n)$ the special outer automorphism group of the free group of rank $n$. 
\end{abstract} 
\maketitle
\tableofcontents

\section{Introduction}
In this paper we give `short' presentations for $\Aut^+(F_n)$ and $\Out^+(F_n)$.  For this paper the length of a presentation $\langle X | R \rangle$ is defined to be $|X|$ plus the sum of the lengths of all elements of $R$ considered as words in $X$.  A presentation is `short' if the length is $O(\log(n))$.  

This work was inspired by \cite{GKKL} in which Guralnick, Kantor, Kassabov and Lubotzky give short presentations for a number of groups.  In particular they give both a presentation for $S_n$, the symmetric group, of length $O(\log(n)$ and a presentation of $SL_n(F_q)$, the special linear group over the finite field of order $q$, of length $O(\log(n)+\log(q))$.  We achieve our `short' presentations for $\Aut^+(F_n)$ and $\Out^+(F_n)$ by adapting their methods and using the presentation for $\Aut^+(F_n)$ found by Gersten in \cite{Gersten}.  In particular, we combine their `short' presentation of $S_n$ with a presentation for $\Aut^+(F_5)$ via an action of $S_n$ on a free basis of $F_n$.  Since the presentation of $S_n$ is bounded (i.e. has a fixed number of generators and relations), the presentation obtained for $\Aut^+(F_n)$ is also bounded.  However, the presentation obtained for $\Out^+(F_n)$ is not, as it has $O(\log(n))$ generators.  

\section{Preliminaries}
Let $\Aut(F_n)$ be the automorphism group of $F_n$ a free group of rank $n$.  There is a homomorphism $\phi: \Aut(F_n) \rightarrow GL_n(\mathbb{Z})$ obtained from the abelianization of $F_n$.  The determinant of $\Aut(F_n)$ is given by $\det \circ \phi$, and $\Aut^+(F_n)$ is the subgroup of $\Aut(F_n)$ of elements of determinant 1.  In defining $\Out^+$ we note that the group of inner automorphisms, $\Inn(F_n)$, is contained in $\ker(\phi)$ and thus $\Inn(F_n) \subset \Aut^+(F_n)$.  We then define $\Out^+(F_n) := \Aut^+(F_n)/\Inn(F_n)$.

Let $\theta$ be a free basis for $F_n$.  For ease of notation we define $\overline{x} = x^{-1}$ for $x \in \theta$, and  let $E = \{x|x$ or $\overline{x}$ in in $\theta\}$.  The Nielsen map $E_{ab}$ is given by $E_{ab}(a) = ab$ and for $c \in E$ with $c \neq a, \overline{a}$, $E_{ab}(c)=c$.  Clearly this is in $\Aut^+(F_n)$ (notice that it maps to an elementary matrix under $\phi$.)
Gersten found a presentation of $\Aut^+(F_n)$ in terms of Nielsen maps:
\begin{theorem} \cite{Gersten}
If $n \geq 3$ a presentation for $\Aut^+(F_n)$ is given by: \\ 
Generators: $E_{ab}$ with $a \neq b, \overline{b}$, $a,b \in E$.
\begin{enumerate}
\item $E_{ab}^{-1} = E_{a\overline{b}}$
\item $[E_{ab},E_{cd}] = 1$ if $a \neq c, d, \overline{d}$, $b \neq c, \overline{c}$
\item $[E_{ab},E_{bc}] = E_{ac}$ if $a \neq c,\overline{c}$.
\item $w_{ab} = w_{\overline{a}\overline{b}}$ where $w_{ab} = E_{ba}E_{\overline{a}b}E_{\overline{b}\overline{a}}$.
\item $w_{ab}^4 = 1$.
\end{enumerate}
\end{theorem}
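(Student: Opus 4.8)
The plan is the standard three-move argument for certifying a group presentation: build a homomorphism from the presented group to $\Aut^+(F_n)$, show it is surjective, and show it is injective, with essentially all of the difficulty concentrated in the last move. Write $G=\langle X\mid R\rangle$ for the group abstractly presented by the generators $E_{ab}$ ($a,b\in E$, $a\neq b,\overline b$) subject to relations (1)--(5), and let $\pi\colon G\to\Aut^+(F_n)$ send each symbol $E_{ab}$ to the Nielsen automorphism of the same name. First I would check that $\pi$ is well defined, i.e. that (1)--(5) hold among the genuine Nielsen automorphisms; these are all short computations on the free basis $\theta$. Relation (1) holds because $E_{ab}E_{a\overline b}$ fixes every element of $\theta$; relation (2) because neither automorphism introduces a basis letter on which the other acts nontrivially; relation (3) because evaluating both sides on $\theta$ gives the automorphism $a\mapsto ac$ fixing the rest, which is $E_{ac}$; and relations (4)--(5) because $w_{ab}=E_{ba}E_{\overline a b}E_{\overline b\overline a}$ is a finite-order automorphism of order $4$ that, up to inverting two basis letters, transposes $a$ and $b$ and fixes the others, so $w_{ab}$ and $w_{\overline a\overline b}$ are literally the same automorphism and its square inverts $a$ and $b$ only.

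The second step, surjectivity, is the classical fact that the Nielsen transvections $E_{ab}$ generate $\Aut^+(F_n)$. I would deduce this from Nielsen's theorem that $\Aut(F_n)$ is generated by the basis permutations, the basis inversions, and a single transvection, together with the observation that the determinant-one elements among those are already products of transvections: conjugating a transvection by one of the Weyl-type elements $w_{ab}$ (itself a word in the $E_{cd}$) again produces a transvection, with its indices permuted and sign-changed, and suitable products of such $w$'s realise every even basis permutation together with an even number of inversions. Since these together with one transvection generate $\Aut^+(F_n)$, so do the $E_{ab}$. (The hypothesis $n\geq 3$ is not needed here, only in the last step.)

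The third step, injectivity of $\pi$ — equivalently, that (1)--(5) is a complete set of relations — is where essentially all the work lies. The route I would take is to import a known finite presentation of the full group $\Aut(F_n)$, for instance McCool's presentation obtained by peak reduction on Whitehead automorphisms (or Nielsen's presentations in low rank); $n\geq 3$ is precisely the range in which the generic relations suffice and no low-rank exceptional relations intrude. One then descends to the index-two subgroup $\Aut^+(F_n)=\ker(\det\circ\phi)$ by the Reidemeister--Schreier rewriting process with the two-element Schreier transversal $\{1,s\}$, where $s$ is an inversion $x\mapsto\overline x$ of a single basis letter. Reidemeister--Schreier then outputs a presentation of $\Aut^+(F_n)$ whose generators are $g$ and $sgs^{-1}$ as $g$ ranges over the Nielsen generators of $\Aut(F_n)$, and whose relators are the $s$-translates of the relators of $\Aut(F_n)$, rewritten over this set; a long sequence of Tietze transformations — rewriting each such generator as a word in the $E_{ab}$ via the second step, eliminating the redundant generators, and reducing each rewritten relator to a consequence of (1)--(5) — then finishes the argument. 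Running this in reverse is often cleaner: adjoin to $G$ a symbol $s$ together with $s^2=1$ and relations prescribing how $s$ conjugates each $E_{ab}$ to another $E_{cd}$, check by Tietze transformations that the resulting group is the known presentation of $\Aut(F_n)$, and then verify, again by Reidemeister--Schreier, that the index-two subgroup generated by the $E_{ab}$ has presentation exactly (1)--(5), so that $G$ injects.

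The main obstacle is precisely the bookkeeping in this last step: one must show that every relation among transvections forced by the known presentation of $\Aut(F_n)$ is already a consequence of the five families. What makes the list of relations to be checked manageable is recognising (1) as a normalisation, (2)--(3) as the Steinberg-type commutator relations, and (4)--(5) as encoding the finite Weyl part, and then using peak-reduction control over the relations of $\Aut(F_n)$ to bound what must be verified. As a consistency check, pushing (1)--(5) forward under the abelianisation $\phi$ recovers the Steinberg relations for $SL_n(\mathbb{Z})$, which is exactly what one expects if $\pi$ is an isomorphism.
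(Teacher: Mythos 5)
The paper contains no proof of this theorem at all: it is imported verbatim from Gersten via the citation \cite{Gersten}, so there is no in-paper argument to compare yours against. Judged on its own terms, your first two steps are sound in outline: relations (1)--(5) really do hold among the genuine Nielsen automorphisms (your verifications are the right short computations on the basis, e.g.\ $[E_{ab},E_{bc}]$ sends $a\mapsto ac$ and fixes the other basis letters, and $w_{ab}$ is the order-four ``Weyl'' element with $a\mapsto\overline b$, $b\mapsto a$, so $w_{ab}=w_{\overline a\overline b}$), and generation of $\Aut^+(F_n)$ by transvections is classical.

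The genuine gap is your third step, which is where the entire content of the theorem lives. Announcing that one imports McCool's (or Nielsen's) presentation of $\Aut(F_n)$, runs Reidemeister--Schreier over the transversal $\{1,s\}$, and then reduces the output to consequences of (1)--(5) by ``a long sequence of Tietze transformations'' describes what a proof would have to accomplish; it does not accomplish it. You do not fix which presentation of $\Aut(F_n)$ you start from, you do not express the Schreier generators as words in the $E_{ab}$, and you do not reduce a single rewritten relator to (1)--(5) --- and for McCool's Whitehead-automorphism presentation that reduction is long and genuinely delicate, which is precisely why Gersten's paper is a paper. The ``reverse'' variant (adjoin $s$ with $s^2=1$ and conjugation relations, then recognise $\Aut(F_n)$) contains exactly the same unexecuted verification. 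Your closing consistency check --- that (1)--(5) abelianise to the Steinberg relations for $SL_n(\mathbb{Z})$ --- is necessary but nowhere near sufficient, since the map $\Aut^+(F_n)\to SL_n(\mathbb{Z})$ has a very large kernel and completeness of the relations must be checked upstairs. As written, this is a plausible roadmap for reproving Gersten's theorem, not a proof of it; the paper's decision to cite the result rather than prove it is the honest one, and if you want to supply an argument you must either carry out the rewriting in full or likewise defer to \cite{Gersten}.
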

With the convention that maps are composed left to right, and $[x,y] = xyx^{-1}y^{-1}$.  The length of this presentation is $O(n^2)$.

By considering the analogous case of elementary matrices in $SL_n(\mathbb{Z})$ we construct a presentation of $\Aut^+(F_n)$ of length $O(n)$.
\begin{theorem}\label{longpres}
If $n \geq 3$ a presentation for $\Aut^+(F_n)$ is given by: \\
Generators: $E_{ba_1}, E_{b\overline{a_1}},E_{a_1b},E_{\overline{a_1}b}$ where $b \in E, b \neq a_1, \overline{a_1}$. \\
Relations:
\begin{enumerate}
\item $E_{ba_1}^{-1} = E_{b\overline{a_1}}$, $E_{a_1b}^{-1} = E_{a_1\overline{b}}$
\item $[E_{ba_1},E_{ca_1}] = 1$, $b \neq c, \overline{c}$
\item $[E_{a_1b},E_{\overline{a_1}c}] = [E_{\overline{a_1}b},E_{a_1c}] = 1$.
\item $[E_{b\overline{a_1}},E_{\overline{a_1}c}] = [E_{ba_1},E_{a_1c}]$ $b \neq c, \overline{c}$ 
\item $[[E_{ba_1},E_{a_1c}],E_{da_1}] = [[E_{ba_1},E_{a_1c}],E_{d\overline{a_1}}] = 1$, $b \neq d, c, \overline{c}$, $c \neq d, \overline{d}$
\item $[[E_{ba_1},E_{a_1c}],E_{a_1d}] = [[E_{ba_1},E_{a_1c}],E_{\overline{a_1}d}] = 1$, $b \neq d, \overline{d}, c, \overline{c}$
\item $[E_{a_1d},[E_{ba_1},E_{a_1c}]] = [E_{\overline{a_1}d},[E_{ba_1},E_{a_1c}]] = 1$, $b \neq d, \overline{d}, c, \overline{c}$
\item $[[E_{ba_1},E_{a_1c}],E_{ca_1}] = E_{ba_1}$, $b \neq c,\overline{c}$
\item $[E_{ba_1},E_{a_1c}],E_{c\overline{a_1}}] = E_{b\overline{a_1}}$, $b \neq c, \overline{c}$
\item $[E_{a_1b},[E_{ba_1},E_{a_1c}]] = E_{a_1c}$, $b \neq c, \overline{c}$
\item $[E_{\overline{a_1}b},[E_{ba_1},E_{a_1c}]] = E_{\overline{a_1}c}$, $b \neq c, \overline{c}$.
\item $w_{a_1b} = w_{\overline{a_1}\overline{b}}$, $w_{ba_1} = w_{\overline{b}\overline{a_1}}$ where $w_{ab}$ is defined as in the previous theorem.
\item $(w_{a_1b})^4 = (w_{ba_1})^4 = 1$
\end{enumerate}
\end{theorem}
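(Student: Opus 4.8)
The plan is to prove that the presentation above is Tietze-equivalent to Gersten's presentation from the previous theorem, and hence also presents $\Aut^+(F_n)$; the bound $O(n)$ on the length is then immediate, since the index $b$ ranges over a set of size $2n-2$ and each relation has bounded length. The organizing idea — the exact analogue of the Steinberg identity $[e_{i1},e_{1j}]=e_{ij}$ that recovers an off-diagonal elementary matrix of $SL_n(\mathbb{Z})$ from the first row and column — is that for $b,c\notin\{a_1,\overline{a_1}\}$ the Nielsen generator $E_{bc}$ should equal $[E_{ba_1},E_{a_1c}]$, which is precisely Gersten's relation (3) for the triple $(b,a_1,c)$. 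So I would start from Gersten's presentation and apply the Tietze transformation that introduces, as a name for the word $[E_{ba_1},E_{a_1c}]$, a fresh generator $E_{bc}$ for every admissible $b,c$ outside $\{a_1,\overline{a_1}\}$; the generating set then becomes exactly that of the theorem, and it remains to compare the two relator sets over this common set.

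One inclusion is routine: each relation of the theorem, and each defining relation $E_{bc}=[E_{ba_1},E_{a_1c}]$, is — after substituting the defining relations — either a literal instance of a Gersten relation (this covers the defining relations and relations (1),(2),(3),(8)--(13)) or a short commutator computation inside Gersten's group (relations (4)--(7)). The substantive inclusion is the converse, namely recovering all of Gersten's relations (1)--(5), for arbitrary admissible indices, from relations (1)--(13) and the defining relations; this amounts to showing that the full family $\{E_{ab}\}$ (with the $E_{bc}$ defined as above) satisfies the entire Steinberg-type relation pattern, exactly as in $SL_n(\mathbb{Z})$ the elementary matrices $e_{ij}$ with $i,j\neq 1$, defined via $[e_{i1},e_{1j}]$, satisfy all the Steinberg relations. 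I would carry out a case analysis according to how many of the indices involved lie in $\{a_1,\overline{a_1}\}$: when some index is $a_1^{\pm1}$ the desired instance of Gersten's (1)--(3) is, after a short manipulation, one of relations (1)--(4) and (8)--(11) (for instance $[E_{a_1b},E_{bc}]=E_{a_1c}$ is relation (10) and $[E_{bc},E_{ca_1}]=E_{ba_1}$ is relation (8)), and when no index is near $a_1$ one expands the $E$'s as commutators of $a_1$-generators and collapses, using the disjoint-commuting relations (5)--(7) and the already-established near-$a_1$ cases.

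The main obstacle, I expect, is exactly this commutator bookkeeping. Several Steinberg relations among the $E_{bc}$ are not on the relator list — most conspicuously Gersten's (3) in the form $[E_{bc},E_{cd}]=E_{bd}$ with $b,c,d$ all away from $a_1$, but also, for example, the commutativity of $E_{bc}$ with $E_{a_1c}$ (needed to deduce Gersten's (1) for $E_{bc}$) — and deriving them requires Hall--Witt-type rearrangements of iterated commutators, performed in a carefully chosen order so as to avoid circularity. Two further points go beyond the $SL_n(\mathbb{Z})$ template: the $\pm$ signs (the bars) on the indices must be tracked throughout, and Gersten's $w$-relations (4),(5) must be recovered for every pair, not only the $a_1$-pairs furnished by relations (12),(13) — for this I would, once Gersten's (1)--(3) are available for all indices, derive conjugation formulas for $w_{a_1b}^{-1}E_{cd}\,w_{a_1b}$ and use them to transport $w_{a_1b}^4=1$ and $w_{a_1b}=w_{\overline{a_1}\,\overline{b}}$ to an arbitrary pair, or else invoke the fact that within Gersten's presentation relations (4),(5) for all pairs follow from (1)--(3) together with (4),(5) for a single pair.

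Once every Gersten relation has been recovered from relations (1)--(13), the two relator sets have the same normal closure over the common generating set, so the augmented presentation presents $\Aut^+(F_n)$; deleting the auxiliary generators $E_{bc}$ by the inverse Tietze transformation then yields exactly the presentation of the theorem, completing the proof.
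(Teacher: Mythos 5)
Your proposal follows essentially the same route as the paper: introduce $E_{bc}:=[E_{ba_1},E_{a_1c}]$ as names for the missing Gersten generators, verify that these satisfy Gersten's relations (1)--(3) (the paper asserts this step with even less detail than you give), and recover the $w$-relations (4)--(5) for arbitrary pairs by conjugation, using $E_{bc}=w_{a_1b}^{-1}E_{\overline{a_1}c}\,w_{a_1b}$ to transport $w_{a_1b}=w_{\overline{a_1}\overline{b}}$ and $w_{a_1b}^4=1$ to $w_{bc}$. The only differences are presentational --- you phrase the argument as an explicit Tietze equivalence and flag the commutator bookkeeping that the paper leaves implicit.
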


\begin{proof}
Set $E_{bc} = [E_{ba_1},E_{a_1c}]$.  We can see $E_{bc}$ will satisfy relations (1), (2) and (3) in Gersten's presentation.  It remains to show that relations (4) and (5) also hold.

Notice that $E_{bc} = w_{a_1b}^{-1}E_{\overline{a_1}c}w_{a_1b}$ and $E_{cb} = w_{a_1b}^{-1}E_{c\overline{a_1}}w_{a_1b}$.  Then \begin{eqnarray*} w_{bc} &:=& E_{cb}E_{\overline{b}c}E_{\overline{c}\overline{b}} 
\\ &=& w_{a_1b}^{-1}E_{c\overline{a_1}}E_{a_1c}E_{\overline{c}a_1}w_{a_1b} 
\\ &=& w_{a_1b}^{-1}w_{\overline{a_1}c}w_{a_1b}
\\ &=& w_{a_1b}^{-1}w_{a_1\overline{c}}w_{a_1b}
\\ &=& w_{\overline{b}\overline{c}}
\end{eqnarray*}
as desired.  The same trick shows that $w_{bc}^4 = 1$, and thus this is a presentation for $\Aut^+(F_n)$.
\end{proof}

\section{Proof of Result}
We now construct our short presentation of $\Aut^+(F_n)$.  The basic idea is to combine the previous presentation of $\Aut^+(F_5)$ with a short presentation of the symmetric group, $S_{n-1}$, viewed as automorphisms that permute the elements of the free basis of $F_n$ (excluding $a_1$) and either fix $a_1$ or send it to $\overline{a_1}$ (depending on which yields an element of determinant 1.)

The desired `short' presentation of $S_{n-1}$ comes from the proof of Theorem 6.1 in \cite{GKKL} and has some added useful requirements.
\begin{theorem}\label{GKSn}\cite{GKKL}
There exists a bounded presentation $\langle X|R \rangle$ of $S_{n-1}$ (viewed as acting on $2,\ldots,n$) of length $O(\log(n)$ such that the following hold:
\begin{enumerate}
\item There exists $X_1,X_2 \subseteq X$ such that $\langle X_1 \cup X_2 \rangle$ projects onto the stabilizer of 2 in $S_{n-1}$.
\item $X_1$ projects into $A_{n-1}$ and $X_2 \neq 0$ projects outside $A_{n-1}$ 
\item The elements $(2,3)$, $(2,3,4,5)$ and $(2,3,\ldots,n)$ have length $O(\log(n))$ in the generators.
\end{enumerate}
\end{theorem}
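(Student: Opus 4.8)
The plan is to take the presentation constructed in the proof of GKKL's Theorem~6.1 essentially verbatim, observe that its generating set can be arranged (up to the evident relabeling of the point set as $\{2,\dots,n\}$) to contain the transposition $\tau = (2,3)$ and the $(n-1)$-cycle $\rho = (2,3,\dots,n)$, and then enlarge it by a bounded number of redundant generators, each with a short defining relator, so that properties (1)--(3) can be read off. Recall the shape of GKKL's construction: $S_{n-1}$, acting on $n-1$ points, is presented with a bounded generating set and a bounded set of relators of total length $O(\log n)$; the length is dominated by their short presentation of a linear group of degree $\approx \log n$ that sits inside $S_{n-1}$ via a $2$-transitive action, adjoining a single transposition to this linear group already producing all of $S_{n-1}$. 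The whole package can be organized around the standard pair $\tau,\rho$ (the theorem's content being that the relations expressing $\langle\tau,\rho\rangle\cong S_{n-1}$ compress to length $O(\log n)$), and the reduction from arbitrary $n$ to the special values where the linear model is cleanest is carried out, at constant cost in length, as in \cite{GKKL}.

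Granting this, property (3) is nearly immediate. The elements $(2,3)=\tau$ and $(2,3,\dots,n)=\rho$ are generators. The $4$-cycle $(2,3,4,5)$ is a product of three transpositions supported on $\{2,3,4,5\}$, each of which is $\tau$ conjugated by a bounded power of $\rho$; hence $(2,3,4,5)$ is a bounded-length word in $X$, and in particular has length $O(\log n)$.

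For (1) and (2) I would adjoin to $X$ two redundant generators with their defining relators: the transposition $s := (3,4)$, which is $\tau$ conjugated by $\rho^{\pm1}$, and an element $c$ obtained from the $(n-2)$-cycle $(3,4,\dots,n)$ --- itself a length-$2$ word in $\tau$ and $\rho$ (one of $\tau\rho$, $\rho\tau$) --- by multiplying with $s$ when necessary so that $c$ is even. Both are words of length $O(\log n)$ in the old generators, so the two new relators have length $O(\log n)$ and the presentation remains bounded and of total length $O(\log n)$. Setting $X_2 := \{s\}$ and $X_1 := \{c\}$, the transposition $s$ is odd and $c$ is even, so $X_1$ projects into $A_{n-1}$ and the nonempty set $X_2$ projects outside it; moreover $\langle X_1\cup X_2\rangle = \langle(3,4),(3,4,\dots,n)\rangle = \mathrm{Sym}\{3,\dots,n\}$, which is exactly the stabilizer of $2$ in $S_{n-1}$. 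This yields (1) and (2).

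The step I expect to be the real work is the one swept into ``can be organized around $\tau,\rho$'': a bounded generating set of $S_{n-1}$ forces a generic element to have word length $\Omega(n\log n)$, so it is \emph{not} automatic that a prescribed useful element such as the long cycle is short, and one must trace through GKKL's recursion to check that $\rho$ --- and all powers of $\rho$ that occur inside their compressed relators --- really do have $O(\log n)$-length expressions (for instance via a Singer-cycle-and-Frobenius description inside the ambient linear group when $n-1$ has the form $2^k-1$, with the general case reduced to such values). Once the generating set is pinned down, the remaining checks --- the parity assignments and the fact that $(3,4)$ together with $(3,4,\dots,n)$ generate the point stabilizer --- are routine.
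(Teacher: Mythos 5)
The paper does not actually prove this statement: it is imported wholesale from the proof of Theorem~6.1 of \cite{GKKL}, with the remark that the ``added useful requirements'' (1)--(3) come out of that proof. So there is no in-paper argument to compare yours against; what can be judged is whether your reconstruction plausibly delivers the stated properties from GKKL's construction. It mostly does. Your treatment of (1) and (2) --- adjoin the two redundant generators $(3,4)$ and a parity-corrected $(n-2)$-cycle on $\{3,\dots,n\}$, each defined by a single relator of length $O(\log n)$, and observe that they generate $\mathrm{Sym}\{3,\dots,n\}$, the stabilizer of $2$ --- is correct and is the natural way to force $X_1,X_2\subseteq X$ while keeping the presentation bounded. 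The derivation of (3) for $(2,3,4,5)$ from shortness of $(2,3)$ and $(2,3,\dots,n)$ is also fine. The one point where your write-up overstates matters is the claim that GKKL's generating set ``can be arranged to contain'' $\tau=(2,3)$ and $\rho=(2,3,\dots,n)$: in their construction the generators come from a bounded presentation of a linear group ($\mathrm{PGL}_2(p)$ or $\mathrm{SL}_2$) acting on a large subset of the points via Bertrand's postulate, glued to a small symmetric group, and neither $\tau$ nor $\rho$ is literally a generator; each must be exhibited as an $O(\log n)$-length word (the linear part via Horner-rule encoding of matrix entries, the cycle via a $p$-cycle in the affine subgroup together with bounded corrections). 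You flag exactly this as ``the real work'' and defer it to tracing GKKL's recursion, which is honest but means the proposal, like the paper, ultimately rests on the unverified assertion that GKKL's proof yields property (3). That is an acceptable level of rigor given that the paper itself offers nothing more, but you should be aware that the load-bearing step is cited rather than proved in both places.
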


Using this we can now prove the main theorem:

\begin{theorem}\label{SAn}
The group $\Aut^+(F_n)$ has a bounded presentation of length $O(\log(n))$.
\end{theorem}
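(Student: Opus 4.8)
The plan is to present $\Aut^+(F_n)$ by fusing $\Aut^+(F_5)$ with a bounded presentation of $S_{n-1}$ along a small common piece, in the spirit of the presentation of $SL_n(\mathbb{F}_q)$ in \cite{GKKL}; the $O(n)$-length presentation of \cref{longpres} will be the list of relations that ultimately has to be recovered. We may assume $n\ge 6$, since the finitely many smaller groups $\Aut^+(F_n)$ are finitely presented and so trivially admit bounded presentations of length $O(1)$. Fix a basis $a_1,\dots,a_n$ of $F_n$ and write $\rho\colon S_{n-1}\to\Aut^+(F_n)$ for the homomorphism that permutes $a_2,\dots,a_n$ by $\sigma$ and sends $a_1\mapsto a_1$ or $a_1\mapsto\overline{a_1}$ according as $\sigma$ is even or odd; this lands in $\Aut^+(F_n)$ since $\det\rho(\sigma)=\mathrm{sgn}(\sigma)^2=1$, and it is a homomorphism because the sign twist is multiplicative. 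The fact that makes the whole scheme run is that $\rho(S_{n-1})$ and the Nielsen maps supported on $\langle a_1,a_2\rangle$ generate $\Aut^+(F_n)$: conjugating $E_{a_2a_1}$ and $E_{a_1a_2}$ by $\rho(S_{n-1})$ produces all of $E_{a_ia_1},E_{a_i\overline{a_1}},E_{a_1a_i},E_{\overline{a_1}a_i}$, and then $E_{a_ia_j}=[E_{a_ia_1},E_{a_1a_j}]$ gives the remaining generators of \cref{longpres}.

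The presentation I would write down has generators $X\cup Y$ and relations falling into three groups. Here $\langle X\mid R\rangle$ is the presentation of $S_{n-1}$ from \cref{GKSn} and $Y$ is the finite generating set of the presentation of $\Aut^+(F_5)$ given by \cref{longpres} (with $n=5$, acting on $\langle a_1,\dots,a_5\rangle$); the three groups of relations are $R$ itself, the relations (F5) of that $\Aut^+(F_5)$-presentation, and a bounded family (INT) of interaction relations. Into (INT) I put, first, two \emph{amalgamation} relations identifying the length-$O(\log n)$ words in $X$ that represent $(2,3)$ and $(2,3,4,5)$ (short by \cref{GKSn}(3)) with the length-$O(1)$ words in $Y$ representing the automorphisms $\rho((2,3))$ and $\rho((2,3,4,5))$ --- which do lie in $\langle Y\rangle=\Aut^+(F_5)$ because their supports sit inside $\langle a_1,\dots,a_5\rangle$; this is exactly why $F_5$ is used. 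Secondly, into (INT) I put a bounded family of \emph{conjugation} relations: that the stabilizer of $a_2$ --- carried by $\langle X_1\cup X_2\rangle$ with $X_1$ even and $X_2$ odd, by \cref{GKSn}(1)--(2) --- centralizes $E_{a_2a_1}$ (its odd part inverting it, and $E_{a_2a_1}^{-1}=E_{a_2\overline{a_1}}$ by \cref{longpres}(1)); and relations recording how the length-$O(\log n)$ words in $X$ for $(2,3)$, $(2,3,4,5)$, and the $(n-1)$-cycle $(2,3,\dots,n)$ conjugate the generators $Y$. Each listed relation holds in $\Aut^+(F_n)$ under $x\mapsto\rho(\bar x)$, $Y\mapsto$ Nielsen maps, so there is a homomorphism $\pi\colon G\to\Aut^+(F_n)$ from the presented group $G$, onto by the last sentence of the previous paragraph.

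Injectivity of $\pi$ splits into a soft half and a hard half. For the soft half, let $N\trianglelefteq G$ be the normal closure of $Y$. Modulo $N$ the relations (F5) and the conjugation relations become trivial, while the two amalgamation relations become $(2,3)=1$ and $(2,3,4,5)=1$; hence $G/N$ is the quotient of $S_{n-1}=\langle X\mid R\rangle$ by the normal closure of $(2,3)$ and $(2,3,4,5)$, which is trivial since these generate $\mathrm{Sym}(\{2,3,4,5\})$ and the conjugates of $(2,3)$ already include every transposition. So $G=N$. For the hard half one checks that $\hat E_{a_ia_1}:=\rho(\sigma)^{-1}E_{a_2a_1}\rho(\sigma)$ (for even $\sigma$ with $\sigma(2)=i$), together with its companions $\hat E_{a_1a_i}$ and $\hat E_{a_ia_j}:=[\hat E_{a_ia_1},\hat E_{a_1a_j}]$, are well defined in $G$ --- independence of $\sigma$ being exactly the stabilizer relations, using also that $\langle X\mid R\rangle$ genuinely is $S_{n-1}$ --- and satisfy every relation of \cref{longpres}. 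Such a relation involves at most four of the letters $a_1,\dots,a_n$, and, using the conjugation relations of (INT) to realise the permutation and $\langle X\mid R\rangle\cong S_{n-1}$, one conjugates it \emph{inside} $G$ to an instance on letters from $\{a_1,\dots,a_5\}$, which is a consequence of (F5). This gives a homomorphism $q\colon\Aut^+(F_n)\to G$ with $q(E_{a_ia_1})=\hat E_{a_ia_1}$; its image is $N=G$, and $\pi\circ q=\mathrm{id}$ on generators, so $q$ is injective, hence bijective, hence $\pi$ is an isomorphism. The presentation has $|X|+|Y|=O(1)$ generators and $O(1)$ relations, each of length $O(\log n)$ --- the only ones not of length $O(1)$ being those inside $R$ and the handful built from the $X$-words for $(2,3)$, $(2,3,4,5)$, $(2,3,\dots,n)$ --- so it is bounded of length $O(\log n)$.

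The hard half is where I expect essentially all of the difficulty to be: producing a \emph{bounded} family (INT) of \emph{short} relations that is nonetheless rich enough to re-derive, working only inside $G$, every one of the $O(n^2)$ relations of \cref{longpres} from the finitely many base cases on five letters, by conjugating with $S_{n-1}$. This is precisely where all three extra features of \cref{GKSn} must be used: property (3), so that the conjugators and the amalgamation relations can be taken of length $O(\log n)$ rather than $O(n)$; and properties (1)--(2), so that ``the stabilizer of $a_2$ centralizes $E_{a_2a_1}$'' can be written with the correct parity using only the bounded sets $X_1,X_2$, which is what makes the $\hat E$'s well defined in $G$. Keeping the sign bookkeeping --- the $a_1\mapsto\overline{a_1}$ twist --- consistent everywhere, and verifying that those finitely many base relations really do imply all of \cref{longpres}, is the bulk of the remaining work.
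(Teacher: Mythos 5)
Your proposal is correct and follows essentially the same route as the paper: amalgamating the bounded $S_{n-1}$ presentation of \cref{GKSn} with the $\Aut^+(F_5)$ presentation of \cref{longpres} via stabilizer-of-$2$ conjugation relations (with the parity twist on $a_1$ split between $X_1$ and $X_2$) and the two short amalgamation relations identifying $(2,3)$ and $(2,3,4,5)$ with explicit words in the Nielsen generators, then defining $E_{a_1a_k}$, $E_{a_ka_1}$ for $k\ge 6$ by conjugation and verifying the relations of \cref{longpres} by pushing each one into the five-letter base case. The ``hard half'' you flag as the bulk of the work is exactly the part the paper carries out by sample computation.
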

\begin{proof}
We can assume $n \geq 6$. Let $E = \{a_1,\ldots,a_n,\overline{a_1},\ldots,\overline{a_n}\}$, $\langle X | R \rangle = S_{n-1}$ as in the previous theorem and $\Aut^+(F_5) = \langle \tilde{X}|\tilde{R} \rangle$ (viewed as acting on $a_1,\ldots,a_5$) be the presentation given in \cref{longpres}.
Then we have a presentation of $\Aut^+(F_n)$ given by: \\
Generators: $X \cup \tilde{X}$ \\
Relations: 
\begin{enumerate}
\item $R \cup \tilde{R}$
\item $E_{a_1a_2}^{x_1} = E_{a_1a_2}$, $E_{a_2a_1}^{x_1} = E_{a_2a_1}$, $E_{\overline{a_1}a_2}^{x_1} = E_{\overline{a_1}a_2}$, $E_{\overline{a_2}a_1}^{x_1} = E_{\overline{a_2}a_1}$ for all $x_1 \in X_1$
\item $E_{a_1a_2}^{x_2} = E_{\overline{a_1}a_2}$, $E_{a_2a_1}^{x_2} = E_{a_2\overline{a_1}}$, $E_{\overline{a_1}a_2}^{x_2} = E_{a_1a_2}$, $E_{\overline{a_2}a_1}^{x_2} = E_{\overline{a_2}\overline{a_1}}$ for all $x_2 \in X_2$
\item $(2,3) = w_{a_1a_2}^2w_{a_2a_3}$
\item $(2,3,4,5) = w_{a_1a_4}^2w_{a_4a_5}w_{a_1a_3}^2w_{a_3a_4}w_{a_1a_2}^2w_{a_2a_3}$
\end{enumerate}

Clearly this presentation has the desired length, and  we can see it has $|X|+5$ generators and $|R|+4|X_1|+4|X_2|+1102$ relations, and thus is a bounded presentation.  

We now show that this is a presentation for $\Aut^+(F_n)$.  For $k \geq 6$, set $E_{a_1a_k} = (2,k)E_{\overline{a_1}a_2}(2,k)$, $E_{\overline{a_1}a_k} = (2,k)E_{a_1a_2}(2,k)$, $E_{a_ka_1} = (2,k)E_{a_2\overline{a_1}}(2,k)$, $E_{\overline{a_k}a_1} = (2,k)E_{\overline{a_2}\overline{a_1}}(2,k)$.  I claim that these elements satisfy all the relations in \cref{longpres}.

We observe the following two facts about $E_{a_ia_j} = [E_{a_ia_1},E_{a_1a_j}]$ (where $i,j \geq 6$):
First \begin{eqnarray*}&&[E_{a_ia_1},E_{a_1a_j}] \\
 &=& (2,i)E_{a_2\overline{a_1}}(2,i)(2,j)E_{\overline{a_1}a_2}(2,j)(2,i)E_{a_2a_1}(2,i)(2,j)E_{\overline{a_1}\overline{a_2}}(2,j) \\
&=& (2,i)E_{a_2\overline{a_1}}(3,i,j)(2,3)(3,i)E_{\overline{a_1}a_2}(3,i)(2,3)(3,j,i)E_{a_2a_1}(3,i,j)(2,3)(3,i)E_{\overline{a_1}\overline{a_2}}(2,j) \\
&=& (2,i)(3,i,j)E_{a_2\overline{a_1}}w_{a_1a_2}^2w_{a_2a_3}E_{a_1a_2}w_{a_1a_2}^2w_{a_2a_3}E_{a_2a_1}w_{a_1a_2}^2w_{a_2a_3}E_{a_1\overline{a_2}}(3,i)(2,j) \\
&=& (2,i)(3,i,j)(2,3)E_{a_3a_1}E_{a_1a_2}E_{a_3\overline{a_1}}E_{a_1\overline{a_2}}(3,i)(2,j) \\
&=& (2,j)(3,i)E_{a_3a_2}(3,i)(2,j) \\
&=& (2,i)(3,i,j)(2,3)E_{a_3\overline{a_1}}E_{\overline{a_1}a_2}E_{a_3a_1}E_{\overline{a_1}\overline{a_2}}(3,i)(2,j) \\
&=& [E_{a_i\overline{a_1}},E_{\overline{a_1}a_j}]
\end{eqnarray*}
And second
\begin{eqnarray*}
(2,3,4,5)E_{a_1a_k}(2,5,4,3) &=& (2,3,4,5)(2,k)E_{\overline{a_1}a_2}(2,k)(2,5,4,3) \\
&=& (2,k)(k,3,4,5)E_{\overline{a_1}a_2}(k,5,4,3)(2,k) \\
&=& (2,k)E_{a_1a_2}(2,k) \\
&=& E_{\overline{a_1}a_k}.
\end{eqnarray*}
We can see $(2,3,4,5)$ behaves similarly with respect to $E_{\overline{a_1}a_k}$, $E_{a_ka_1}$, $E_{\overline{a_k}a_1}$. Thus $(2,3,4,5)$ commutes with $[E_{a_ia_1},E_{a_1a_j}]$.  Combining these we have \begin{eqnarray*} E_{a_ia_j} &=& E_{a_ia_j}^{(2,3,4,5)^2} \\ &=& (4,j)(5,i)E_{a_5a_4}(5,i)(4,j).\end{eqnarray*}  
These facts can be used to establish the more complicated relations.  For example if $a_i \neq a_j,\overline{a_j},a_k,\overline{a_k}$ we have that:
\begin{eqnarray*}
&&[[E_{a_ia_1},E_{a_1a_j}],E_{a_1a_k}] \\ 
&=& E_{a_ia_j}E_{a_1a_k}E_{a_ia_j}^{-1}E_{a_1a_k}^{-1} \\
&=& E_{a_ia_j}(2,5,4,3)E_{\overline{a_1}a_k}(2,3,4,5)E_{a_ia_j}^{-1}E_{a_1a_k}^{-1} \\
&=& E_{a_ia_j}(2,5,4,3)(2,k)E_{a_1a_2}(2,k)(2,3,4,5)E_{a_ia_j}^{-1}E_{a_1a_k}^{-1} \\
&=& E_{a_ia_j}(3,k)(2,5,4,3)E_{a_1a_2}(2,3,4,5)(3,k)E_{a_ia_j}^{-1}E_{a_1a_k}^{-1} \\
&=& E_{a_ia_j}(3,k)E_{\overline{a_1}a_3}(3,k)E_{a_ia_j}^{-1}(3,k)E_{\overline{a_1}a_3}^{-1}(3,k) \\
&=& (3,k)E_{a_ia_j}E_{\overline{a_1}a_3}E_{a_ia_j}^{-1}E_{\overline{a_1}a_3}^{-1}(3,k) \\
&=& (3,k)(2,5,4,3)E_{a_ia_j}(2,3,4,5)E_{\overline{a_1}a_3}(2,5,4,3)E_{a_ia_j}^{-1}(2,3,4,5)E_{\overline{a_1}a_3}^{-1}(3,k) \\
&=& (3,k)(2,5,4,3)E_{a_ia_j}E_{a_1a_2}E_{a_ia_j}^{-1}E_{a_1a_2}^{-1}(2,3,4,5)(3,k) \\
&=& (3,k)(2,5,4,3)(4,j)(5,i)E_{a_5a_4}E_{a_1a_2}E_{a_5a_4}^{-1}E_{a_1a_2}^{-1}(5,i)(4,j)(2,3,4,5)(3,k) \\
&=& 1
\end{eqnarray*}
The other computations for the other needed relations are similar.  Thus we have the desired short presentation for $\Aut^+(F_n)$.

\end{proof}

We now use this to obtain our presentation for $\Out^+(F_n)$ by constructing an inner automorphism in terms of the $E_{ab}$s and observing that there is generating set of $\Inn(F_n)$ consisting of conjugate elements.

\begin{theorem}
The group $\Out^+(F_n)$ has a presentation of length $O(\log(n))$.
\end{theorem}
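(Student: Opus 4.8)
The plan is to build a presentation of $\Out^+(F_n)$ on top of the bounded presentation of $\Aut^+(F_n)$ from \cref{SAn} by adjoining relations that impose $\Inn(F_n)=1$. Since $\Out^+(F_n)=\Aut^+(F_n)/\Inn(F_n)$, and since adjoining to a presentation $\langle Y\mid S\rangle$ of a group $G$ any word $w$ over $Y$ representing an element $g\in G$ gives a presentation of $G/\langle\langle g\rangle\rangle$, it is enough to present $\Inn(F_n)$ as the normal closure in $\Aut^+(F_n)$ of a short finite set of elements. In fact one element suffices: $\Inn(F_n)$ is generated as a group by the conjugations $\gamma_j\colon x\mapsto a_jxa_j^{-1}$, these are mutually conjugate in $\Aut^+(F_n)$ because $\Aut^+(F_n)$ is transitive on the basis elements for $n\ge3$ (for instance the special automorphism interchanging $a_1,a_j$ and inverting some third $a_k$ carries $a_1$ to $a_j$), and the conjugates of $\gamma_1$ therefore include $\gamma_2,\dots,\gamma_n$ and generate $\Inn(F_n)$; as $\Inn(F_n)$ is normal, its normal closure is exactly $\Inn(F_n)$. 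Hence $\Out^+(F_n)$ is presented by the generators and relations of \cref{SAn} together with the one extra relation $\gamma_1=1$. The only obstacle is length: $\gamma_1$ moves $n-1$ of the $n$ basis elements, so any word for it in the Nielsen generators $E_{ab}$ has length $\Omega(n)$, and it is the auxiliary generators introduced to get around this that make the final presentation unbounded.

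I would express $\gamma_1$ by a telescoping trick built on the long cycle $\zeta:=(2,3,\dots,n)$, a word of length $O(\log n)$ over $X$ by \cref{GKSn}. For $S\subseteq\{2,\dots,n\}$ let $\gamma_1^{(S)}$ be the special automorphism conjugating $a_j$ by $a_1$ for $j\in S$ and fixing the remaining basis elements; these satisfy $\gamma_1^{(S)}\gamma_1^{(T)}=\gamma_1^{(S\sqcup T)}$ for disjoint $S,T$, one has $\gamma_1=\gamma_1^{(\{2,\dots,n\})}$, and $\gamma_1^{(\{2\})}=E_{\overline{a_2}\,\overline{a_1}}E_{a_2\overline{a_1}}=:c$ is a bounded word over $\tilde X$. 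When the permutation $(2,\dots,n)$ is realized in $\Aut^+(F_n)$ by an automorphism fixing $a_1$ — which is exactly the case $n$ even — conjugating $c$ by $\zeta^{\,j}$ shifts the support to $\{2+j\}$, so $\gamma_1=\prod_{j=0}^{n-2}\zeta^{-j}c\,\zeta^{\,j}$, which telescopes (using $\zeta^{\,n-1}=1$) to $\gamma_1=(c\zeta^{-1})^{\,n-1}$. For $n$ odd the realization of $\zeta$ inverts $a_1$, so I would instead work with $\zeta^2$, which fixes $a_1$ and has two orbits on $\{2,\dots,n\}$, obtaining $\gamma_1=(c\zeta^{-2})^{(n-1)/2}(c'\zeta^{-2})^{(n-1)/2}$ with $c'=\gamma_1^{(\{3\})}$ the analogue of $c$. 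Either way $\gamma_1$ is a product of at most two powers $w^N$ with $N=O(n)$ and $w$ a word of length $O(\log n)$; introducing $O(\log n)$ auxiliary generators $u_0,u_1,\dots$ with $u_0=w$ and $u_{i+1}=u_i^2$ (so $u_i$ represents $w^{2^i}$) lets me rewrite each such $w^N$ as a product of $O(\log n)$ of the $u_i$ according to the binary expansion of $N$. These extra generators and their defining relations add length $O(\log n)$, so with \cref{SAn} and the single relation $\gamma_1=1$ the total length is $O(\log n)$ while the generator count is $O(\log n)$ — not bounded, as the introduction anticipated.

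What remains is verification, and it is largely routine. One checks the two facts about the $\gamma_1^{(S)}$ — multiplicativity on disjoint sets, and that conjugation by $\zeta$ (respectively $\zeta^2$) shifts the support — by the same sort of direct Nielsen-map computation as in the proof of \cref{longpres}; one checks the telescoping identities and that $\zeta^{\,n-1}=1$; the adjunction of the $u_i$ with their defining relations is a sequence of Tietze transformations that does not change the group; and one reconfirms that the normal closure of $\gamma_1$ is all of $\Inn(F_n)$. I expect the main nuisance to be the parity bookkeeping — whether $\zeta$ fixes or inverts $a_1$, the resulting split into two cases, and the harmless inversions that the realized cycle may introduce among $a_2,\dots,a_n$ — which is shallow but easy to get slightly wrong. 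As usual we may assume $n\ge 6$, the finitely many smaller cases being covered by any fixed presentation of $\Out^+(F_3),\Out^+(F_4),\Out^+(F_5)$.
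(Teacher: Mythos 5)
Your proposal is correct and follows essentially the same route as the paper: both present $\Out^+(F_n)$ by adjoining to the presentation of \cref{SAn} a single relation killing the conjugation by $a_1$, which normally generates $\Inn(F_n)$, and both write that inner automorphism as a telescoping product $(\xi\sigma^{\pm1})^{n-1}$ built from a bounded Nielsen word and the long cycle $(2,\dots,n)$, then compress the large exponent with $O(\log n)$ repeated-power auxiliary generators (the paper uses base $4$, you use base $2$). The only cosmetic difference is the odd-$n$ parity fix: the paper peels off $a_2$ and cycles $(3,\dots,n)$, while you square the cycle and use its two orbits.
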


\begin{proof}
Let $\langle X| R \rangle$ be the presentation obtained in \cref{SAn}.  If $n$ is even set $\sigma = (2,3,\ldots,n)$, $\xi = E_{a_2a_1}E_{\overline{a_2}a_1}$, and if $n$ is odd set $\sigma = (2,3,\ldots,n)(2,3) = (3,\ldots,n)$ and $\xi = E_{a_3a_1}E_{\overline{a_3}a_1}$.  Then there is a presentation for $\Out^+$ given by: 
\\
Generators: $X \cup \{\gamma\}$
\\
Relations: 
\begin{enumerate}
\item $R$
\item $\gamma = \xi\sigma$
\item $\gamma^{n-1} = 1$ if $n$ is even and $E_{a_2a_1}E_{\overline{a_2}a_1}\gamma^{n-2} = 1$ if $n$ is odd.
\end{enumerate}
Notice that the length of this presentation is not $O(\log(n))$, however by using the variation of Horner's Rule used in \cite{GKKL} Proposition 3.1 (3.3) we can convert this to a `short' presentation.  To do this we add the generators $\gamma_1 = \gamma^4,\gamma_2 = \gamma^{4^2},\ldots,\gamma_m = \gamma^{4^m}$ where $m = \lfloor{\log_4(n-2)}\rfloor$ and the relations $\gamma_i = (\gamma_{i-1})^4$.  Then $n-2 = a_0 + 4a_1 + \cdots + 4^ma_m$ with $0 \leq a_1,\ldots,a_m \leq 4$ and thus $\gamma^{n-2}$ can be replaced with $\gamma^{a_0}\gamma_1^{a_1}\cdots\gamma_m^{a_m}$, yeilding a presentaiton with length $O(\log(n))$.

To show that the original presentation is in fact $\Out^+(F_n)$ we show that $\Inn(F_n)$ is generated by words of the form $w_{a_1a_i}\gamma^{n-1}w_{a_1a_i}^{-1}$ if $n$ is even and $w_{a_1a_i}E_{a_2a_1}E_{\overline{a_2}a_1}\gamma^{n-2}w_{a_1a_i}^{-1}$ if $n$ is odd.

The map $f_1:F_n \rightarrow F_n$ given by $f_1(x) = a_1^{-1}xa_1$ can be expressed as $f_1 = \gamma^{n-1}$ or $E_{a_2a_1}E_{\overline{a_2}a_1}\gamma^{n-2}$ because \begin{eqnarray*} f_1 &=& E_{a_2a_1}E_{\overline{a_2}a_1}E_{a_3a_1}E_{\overline{a_3}a_1}\cdots E_{a_na_1}E_{\overline{a_n}a_1} \\ &=& E_{a_2a_1}E_{\overline{a_2}a_1}(\sigma E_{a_2a_1}E_{\overline{a_2}a_1} \sigma^{-1})(\sigma^2 E_{a_2a_1}E_{\overline{a_2}a_1} \sigma^{-2})\cdots(\sigma^{n-2}E_{a_2a_1}E_{\overline{a_2}a_1}\sigma^{2-n}) \\  &=& \gamma^{n-1}\end{eqnarray*} if $n$ is even.  The analogous result holds for $n$ odd.  

It is clear that $\Inn(F_n)$ is generated by the maps $f_i:F_n \rightarrow F_n$ given by $f_i(x) = a_i^{-1}xa_i$.  To get from $f_1$ to $f_i$ we observe that $w_{a_ia_1}E_{a_ja_1}E_{\overline{a_j}a_1}w^{-1}_{a_ia_1} = (E_{a_ja_i}E_{\overline{a_j}a_i})^{-1}$ when $j \neq i$ and $(E_{a_1a_i}E_{\overline{a_1}a_i})^{-1}$ when $j = i$.  Thus 
$w_{a_ia_1}f_1^{-1}w_{a_ia_1}^{-1} = f_i$ and we have that the map from $\Aut^+(F_n)$ to this presentation is surjective and has kernel equal to $\Inn(F_n)$.  Thus this is a presentation of $\Out^+(F_n)$.

\end{proof}

\bibliographystyle{amsplain}

\end{document}